\newtheorem{theorem}{Theorem}[section]
\newtheorem{lemma}[theorem]{Lemma}
\newtheorem{proposition}[theorem]{Proposition}
\newtheorem{corollary}[theorem]{Corollary}
\theoremstyle{definition}
\newtheorem{remark}[theorem]{Remark}
\newcommand{\G}{\mathcal{G}}
\newcommand{\T}{\mathbb{T}}
\newcommand{\Z}{\mathbb{Z}}
\newcommand{\R}{\mathbb{R}}
\newcommand{\C}{\mathbb{C}}
\newcommand{\K}{\mathcal{K}}	
\newcommand{\rank}{\mathrm{rank}}
\newcommand{\Aut}{\mathrm{Aut}}
\newcommand{\Inn}{\mathrm{Inn}}
\begin{document}
\title { Connective Bieberbach groups}
\author{Marius Dadarlat}\address{MD: Department of Mathematics, Purdue University, West Lafayette, IN 47907, USA}\email{mdd@purdue.edu}
\author{Ellen Weld}\address{EW:
   Department of Mathematics, Purdue University, West Lafayette, IN 47907, USA}\email{weld@purdue.edu}
  \thanks{M.D. was partially supported by an NSF grant \#DMS--1700086 } 	

\begin{abstract}
We prove that
a  Bieberbach group  with trivial center  is not connective and use this property to show that
a Bieberbach group is connective if and only if it is poly-$\Z$.
\end{abstract}
\maketitle


\section{Introduction}
 Connectivity \cite{Dad-Pennig-connective} is a homotopy invariant property of a separable $C^*$-algebra $A$
 that  has three interesting consequences: absence of nonzero projections, quasidiagonality, and realization of the Kasparov groups as homotopy classes of asymptotic morphisms from $A$ to $B\otimes \K$  without  suspensions, that is $KK(A,B)\cong [[A,B\otimes \K]]$, if $A$ is nuclear.
 A separable amenable (nuclear)  C*-algebra is connective if it embeds in the C*-algebra $\prod_n B_n /\bigoplus _n B_n$ where $B_n=C_0((0,1], M_n(\C))$ is the C*-algebra of continuous functions from $[0,1]$ to $n\times n$ complex matrices vanishing at $0$.
 A countable discrete group $G$ is called connective if  the kernel $I(G)$ of the trivial representation $\iota:C^*(G)\to \C$ is a connective $C^*$-algebra.
 If $G$ is connective and amenable, then $K^0(C^*(G))=\Z [\iota] \oplus [[I(G),\K]]$. This implies that the nontrivial part of the $K$-homology of $C^*(G)$ can be realized as homotopy classes of asymptotic representations $\{\pi_t:G \to U(\infty)\}_{t\in [1,\infty)}$ with  $t\mapsto \pi_t(g)$ continuous, $g \in G$, and
 $\lim_{t\to \infty}\|\pi_t(g_1g_2)-\pi_t(g_1)\pi_t(g_2)\|=0$, $g_1,g_2\in G$.
Large classes of amenable connective  groups   were exhibited in \cite{Dad-Pennig-connective}, \cite{Dad-Pennig-homotopy-symm}, \cite{Dad-Pennig-Schneider}.

 Connectivity of a separable nuclear $C^*$-algebra $A$ can be characterized solely in terms of its primitive spectrum $\mathrm{Prim}(A)$.
 It was shown in \cite{Dad-Pennig-connective} that the existence of a
nonempty compact open subset of $\mathrm{Prim}(A)$ is an obstruction to connectivity.
By a remarkable result of J. Gabe \cite{Gabe:connectivity} this is the only obstruction.

A crystallographic group of dimension $k\geq 1$ is a discrete co-compact subgroup of the isometry group
$\mathrm{Iso}(\R^k)=\R^k\rtimes O(k)$ of the Euclidean space $\R^k$. In his renowned work on Hilbert's 18th Problem, Bieberbach proved that any crystallographic group $G$ of dimension $k$ fits into an exact sequence
\begin{equation}\label{eq:bieber}
\xymatrix{
	1 \ar[r] & N \ar[r] & G \ar[r] & D \ar[r] & 1
}
\end{equation}
where $N\cong\Z^k$ is a maximally abelian  subgroup of $G$, called the lattice of $G$, and $D$ is a finite group called the holonomy group.
Moreover, for each fixed $k$, there are only finitely many isomorphism classes of crystallographic
groups of dimension $k$ and two crystallographic groups are isomorphic if and only
if they are conjugate in the group $\R^k\rtimes GL_k(\R)$,  \cite{Charlap}.
A torsion free crystallographic group
is called a Bieberbach group. The orbit space $\R^k/G$ of a Bieberbach group is a $k$-dimensional closed flat Riemannian manifold $M$ with holonomy group isomorphic to $D$.

Which Bieberbach groups are connective? It was shown in \cite{Dad-Pennig-connective} that while any Bieberbach group with cyclic holonomy is connective \cite[Thm. 3.8]{Dad-Pennig-connective},
the celebrated Hantzsche-Wendt group \cite{Hantzsche-Wendt} (denoted here by $\Gamma$) is not connective \cite[Cor. 3.2]{Dad-Pennig-connective}.
$\Gamma$ is generated by two elements $x$ and $y$ subject to two relations:
\(x^2yx^2=y,\quad y^2xy^2=x,\)  and fits into an exact sequence
\[
\xymatrix{
	1 \ar[r] & \Z^3 \ar[r] & \Gamma \ar[r] & \Z/2\oplus \Z/2 \ar[r] & 1.
}
\]
The non-connectivity of $\Gamma$ was proved by showing that $\widehat{\Gamma}\setminus \{\iota\}$ is a compact-open subset of the unitary spectrum $\widehat{\Gamma}$.
There are exactly 10 non-isomorphic 3-dimensional Bieberbach groups. The Hantzsche-Wendt group $\Gamma$ is singled out among these groups by the property that  it has finite  first homology group, in fact, isomorphic to  $\Z/4\oplus \Z/4$ \cite{Wolf-spaces}.
The first homology group $H_1(G,\Z)$ of a group $G$ can be computed as $H_1(G,\Z)=G/[G,G]$ where $[G,G]$ is the commutator subgroup of $G$.
In view of the lack of  connectivity of the Hantzsche-Wendt group, it is natural to ask to what extent connectivity
of a Bieberbach group relates to its homology. Independently of us, Szczepa\'{n}ski asks the same question at the end of his paper \cite{Szczepanski}. As a key step in our study of connectivity of Bieberbach groups we prove the following:
\begin{theorem}\label{thm:a}
  A Bieberbach group with finite first homology group is not connective.
\end{theorem}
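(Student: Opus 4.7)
The strategy is to exhibit, using the hypothesis of finite $H_1$, a nonzero finite-dimensional unital quotient of $I(G)$; then the fact that such quotients cannot be connective, combined with a standard permanence property of connectivity under quotients, forces $I(G)$ itself to be non-connective.

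\emph{Reduction to a finite quotient.} Applying the Lyndon--Hochschild--Serre five-term exact sequence
\[
H_2(D,\Z)\;\longrightarrow\; N_D \;\longrightarrow\; H_1(G,\Z) \;\longrightarrow\; H_1(D,\Z) \;\longrightarrow\; 0
\]
to the Bieberbach extension $1 \to N \to G \to D \to 1$, and using that $H_1(D,\Z)$ and $H_2(D,\Z)$ are finite since $D$ is finite, we see that the finiteness of $H_1(G,\Z)$ forces the coinvariants $N_D = N/[N,G]$ to be finite. Consequently $G/[N,G]$, which sits in the extension $1 \to N_D \to G/[N,G] \to D \to 1$, is a finite group. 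It is moreover nontrivial, for otherwise $D = 1$ and then $G = N = \Z^k$ with $H_1(G,\Z) = \Z^k$ infinite for $k \geq 1$. The surjection $G \twoheadrightarrow G/[N,G]$ now induces a surjective $*$-homomorphism $C^*(G) \twoheadrightarrow C^*(G/[N,G])$ that intertwines trivial representations, restricting to a surjection of augmentation ideals $I(G) \twoheadrightarrow I(G/[N,G])$. The target $I(G/[N,G])$ is a nonzero unital finite-dimensional $C^*$-algebra, and therefore contains nonzero projections.

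\emph{Descent of connectivity to the quotient.} Since $G$ is amenable, $C^*(G)$ is nuclear and so is the ideal $I(G)$. For a separable nuclear $C^*$-algebra $A$, connectivity is inherited by its quotients: a Choi--Effros completely positive section of the quotient map $A \to A/J$ can be used to transport an asymptotic null-homotopy of the canonical corner embedding $A \to A \otimes \K$ (witnessing connectivity) into an analogous null-homotopy for $A/J \to A/J \otimes \K$. If $I(G)$ were connective, it would follow that $I(G/[N,G])$ is connective too; but a connective $C^*$-algebra has no nonzero projections, contradicting the preceding paragraph. Hence $I(G)$ is not connective, and $G$ is not connective.

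The main obstacle in this plan is the clean verification of the descent of connectivity to nuclear quotients inside the asymptotic-morphism framework of \cite{Dad-Pennig-connective}. An alternative, spectrally flavored route would be to invoke J. Gabe's theorem and display directly a nonempty compact-open subset of $\mathrm{Prim}(I(G))$, most naturally the finite set $\widehat{G/[N,G]} \setminus \{\iota\}$; however, verifying that this subset is open in the Fell topology on $\widehat{G}$ is delicate, because the low-dimensional representations factoring through $G/[N,G]$ are typically weak limits of higher-dimensional representations induced from the Mackey parameterization.
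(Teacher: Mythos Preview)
The proposal rests on a claim that is false: connectivity of separable nuclear $C^*$-algebras does \emph{not} pass to quotients. The cone $C_0((0,1])$ is connective (it is contractible), it is nuclear and separable, and evaluation at $1$ gives a surjection $C_0((0,1])\twoheadrightarrow \C$ onto a unital algebra with a nonzero projection, hence non-connective. Your Choi--Effros argument breaks exactly where one would expect: if $s\colon B\to A$ is a c.p.c.\ section of $\pi\colon A\to B$ and $\varphi_n\colon A\to M_{k(n)}(C_0(0,1])$ witnesses connectivity of $A$, then $\varphi_n\circ s$ is c.p.c., but
\[
\varphi_n\bigl(s(b_1b_2)\bigr)-\varphi_n\bigl(s(b_1)\bigr)\varphi_n\bigl(s(b_2)\bigr)
=\varphi_n\bigl(s(b_1b_2)-s(b_1)s(b_2)\bigr)+o(1),
\]
and $s(b_1b_2)-s(b_1)s(b_2)\in\ker\pi$ is a fixed nonzero element on which the asymptotically isometric maps $\varphi_n$ do \emph{not} vanish. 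In terms of Gabe's spectral characterization, $\mathrm{Prim}(B)$ sits in $\mathrm{Prim}(A)$ as a \emph{closed} subset, so a compact open subset of $\mathrm{Prim}(B)$ is compact in $\mathrm{Prim}(A)$ but need not be open there.

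Your preliminary observation that $G/[N,G]$ is a nontrivial finite quotient is correct, but it only tells you that $\widehat{G/[N,G]}\setminus\{\iota\}$ is a nonempty compact \emph{closed} subset of $\widehat G\setminus\{\iota\}$; openness is precisely the hard part you flag at the end. The paper takes exactly that spectral route and does the work: using the finiteness of $\widehat N^{\,G}$ (equivalent to $Z(G)=\{0\}$, Proposition~\ref{prop}) together with Mackey's description of $\widehat G$, it proves that every character $\omega$ of $G$ is a shielded point of $\widehat G$, i.e.\ $\widehat G\setminus\{\omega\}$ is compact open (Theorem~\ref{thm:b}). The argument splits a convergent sequence $\pi_n\to\omega$ according to whether the common stabilizer equals $G$ (forcing the $\pi_n$ to factor through a fixed finite quotient and hence be eventually constant) or is a proper subgroup (in which case an explicit off-diagonal block in the induced representation survives in the limit and produces an irreducible subrepresentation $\gamma\neq\omega$). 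Non-connectivity then follows from \cite[Cor.~2.11 or Prop.~2.7]{Dad-Pennig-connective}. So the ``delicate'' alternative you set aside is in fact the argument that is needed.
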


By work of Calabi \cite{Calabi} (see the discussion in \cite{Szczepanski-book}, \cite{Wolf-spaces}) any Bieberbach group $G$ with infinite first homology group can be written as
  an iterated semidirect product
 \begin{equation}\label{eqn:convention} G\cong ((H \rtimes \Z)\rtimes \cdots )\rtimes \Z\end{equation}
 where either $H$ is a Bieberbach group such that $H_1(H,\Z)$ is finite, or $H=\{1\}$, in which case $G$ is a poly-$\Z$ group.
We use the Calabi decomposition of $G$  in conjunction with Theorem~\ref{thm:a} to prove  the following:
 \begin{theorem}\label{thm:char}
   Let $G$ be a Bieberbach group. The following assertions are equivalent.
   \begin{itemize}
   \item[(i) ] $G$ is connective.
\item[(ii) ]  Every  nontrivial subgroup of $G$ has a nontrivial center.
\item[(iii) ] $G$ is a poly-$\Z$ group.
\item[(iv) ] $\widehat{G}\setminus \{\iota\}$ has no nonempty compact open subsets.
\end{itemize}
\end{theorem}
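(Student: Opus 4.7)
The plan is to establish (i) $\Leftrightarrow$ (iv) directly from Gabe's characterization of connectivity, derive (iii) $\Rightarrow$ (i) from permanence of connectivity under $\Z$-extensions, prove (ii) $\Leftrightarrow$ (iii) from the structure theory of Bieberbach groups, and obtain the crucial implication (i) $\Rightarrow$ (iii) by combining Theorem~\ref{thm:a} with Calabi's decomposition. For (i) $\Leftrightarrow$ (iv), since $G$ is amenable, finitely generated and virtually abelian, $C^*(G)$ is type I and nuclear with $\mathrm{Prim}(I(G)) = \widehat{G} \setminus \{\iota\}$, and Gabe's theorem gives the equivalence at once. For (iii) $\Rightarrow$ (i), I would proceed by induction on the Hirsch length, starting from $I(\Z) \cong C_0(\T \setminus \{1\})$, which is connective because $\T \setminus \{1\}$ has no nonempty compact open subset, and invoke the permanence of connectivity under $\Z$-extensions of group $C^*$-algebras established in the cited references.

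For (ii) $\Leftrightarrow$ (iii), every nontrivial subgroup $H \le G$ is torsion-free, finitely generated and virtually abelian, hence itself a Bieberbach group of some dimension. If $G$ is poly-$\Z$, so is $H$, and the innermost $\Z$ in the Calabi tower for $H$ is central, whence $Z(H) \neq \{1\}$. Conversely, assuming (ii), we have $Z(G) \neq \{1\}$, hence $H_1(G, \Z)$ is infinite; Calabi writes $G = G' \rtimes \Z$ with $G'$ Bieberbach of lower dimension, and (ii) descends to $G'$ since its subgroups are subgroups of $G$. Induction on dimension concludes that $G$ is poly-$\Z$.

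The heart of the proof is (i) $\Rightarrow$ (iii). Suppose $G$ is connective. Theorem~\ref{thm:a} forces $H_1(G, \Z)$ to be infinite, so Calabi's decomposition expresses $G \cong ((H \rtimes \Z) \rtimes \cdots) \rtimes \Z$ with $H$ either trivial, in which case $G$ is poly-$\Z$, or a nontrivial Bieberbach group with finite $H_1$. In the latter case, Theorem~\ref{thm:a} supplies a unital proper ideal $J \lhd C^*(H)$ contained in $I(H)$, corresponding to a finite compact open subset of $\widehat{H} \setminus \{\iota_H\}$ assembled from the holonomy-fixed unitary characters of the lattice. I would propagate this witness one extension at a time: at each step $G' = H' \rtimes \Z$, the previously constructed ideal $J' \lhd C^*(H')$ can be arranged to be $\Z$-invariant (pass to its $\Z$-saturation, which remains unital and proper because the $\Z$-action permutes the analogous finite sets of holonomy-fixed characters), and then $J' \rtimes \Z \lhd C^*(H') \rtimes \Z = C^*(G')$ is a unital proper ideal whose spectrum lies in $\widehat{G'} \setminus \{\iota_{G'}\}$. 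Reaching the top produces a nonempty compact open subset of $\widehat{G} \setminus \{\iota_G\}$, contradicting (iv). The principal obstacle is precisely this propagation: one must verify $\Z$-invariance, unitality, properness, and disjointness from the trivial representation at each level, and confirm that Jacobson-openness is preserved under the iterated crossed products.
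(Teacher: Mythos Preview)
Your scheme for (i) $\Leftrightarrow$ (iv) and (ii) $\Leftrightarrow$ (iii) is essentially the paper's, but the two implications that close the cycle both have genuine gaps.

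\medskip
\textbf{(i) $\Rightarrow$ (iii).} You attempt to push a compact open witness from $\widehat{H}$ up through the iterated crossed products, and you correctly flag this propagation as the ``principal obstacle'': verifying $\Z$-invariance, unitality, properness and disjointness from $\iota$ at each level is a real difficulty, and your sketch does not resolve it. The paper bypasses all of this with a one-line observation you never invoke: \emph{connectivity passes to subgroups} (this is in \cite{Dad-Pennig-connective}). Since the innermost group $H$ in the Calabi tower is a subgroup of $G$, if $G$ is connective then so is $H$; but by Theorem~\ref{thm:a} a Bieberbach group with trivial center is not connective. Hence $H=\{1\}$ and $G$ is poly-$\Z$. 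No crossed-product propagation is needed at all. This is the idea you are missing.

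\medskip
\textbf{(iii) $\Rightarrow$ (i).} The cited references do \emph{not} give permanence of connectivity under arbitrary $\Z$-extensions. What is available (Lemma~\ref{lem:fin_ext}, Corollary~\ref{cor:ind}) is that $G'\rtimes_\alpha\Z$ is connective whenever $G'$ is and $\alpha^m\in\Inn(G')$ for some $m\ge 1$. For a general poly-$\Z$ group there is no reason for the automorphisms in the tower to satisfy this condition. The paper supplies the missing ingredient: by Proposition~\ref{prop:inner} (Szczepa\'nski), a semidirect product $G'\rtimes_\alpha\Z$ of Bieberbach groups is itself Bieberbach \emph{if and only if} some power of $\alpha$ is inner. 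Since every $G_i$ in the poly-$\Z$ tower of a Bieberbach $G$ is again Bieberbach, the inner condition holds at each stage, and only then does the induction go through. Your outline omits this step.

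\medskip
A minor point on (iii) $\Rightarrow$ (ii): the ``innermost $\Z$'' in a poly-$\Z$ tower for a subgroup $H$ need not be central in $H$ (the outer factors may act on it nontrivially). The correct argument is that a nontrivial poly-$\Z$ Bieberbach group $H$ surjects onto $\Z$, so $H_1(H,\Z)$ is infinite, and then $Z(H)\neq\{1\}$ by Proposition~\ref{pro:revisit}.
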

Auslander and Kuranishi \cite{Auslander-Kuranishi} showed that any finite group is the holonomy group of some torsion free Bieberbach group.
In general one cannot determine if a Bieberbach group is connective by just looking at its holonomy group.
Nevertheless, by using results of \cite{Hiller} and \cite{Diffuse} one can derive the following:
\begin{corollary}\label{cor} Let $D$ be a finite group.
\begin{itemize}
   \item[(a) ] If $D$ is not solvable, then any Bieberbach group with holonomy $D$ is not connective.
\item[(b) ]  If $D$ is solvable with all Sylow subgroups cyclic (solvability is automatic in this case),  then any Bieberbach group with holonomy $D$ is connective.
\item[(c) ]   If $D$ is solvable and has a non-cyclic Sylow subgroup, then there are Bieberbach groups $G_1$ connective and $G_2$ not connective both with holonomy group $D$.
\end{itemize}
\end{corollary}
A discrete group $G$ is called locally indicable  if  every finitely generated non-trivial subgroup
 $L$ of $G$ has a quotient isomorphic to $\Z$ or equivalently $H_1(L,\Z)$  is infinite.
 The group $G$ is called diffuse if every non-empty finite subset $A$ of $G$ has an element $a\in  A$  such that for any
$g\in G$, either $ga$ or $g^{-1}a$ is not in $A$, \cite{Bowditch}.  Linnell and Witte Morris \cite{Linnell-Witte} proved that an amenable group is diffuse if and only if is locally indicable. It follows then by
Theorem~\ref{thm:char} that   a Bieberbach group $G$ is connective $\Leftrightarrow$ $G$ is locally indicable $\Leftrightarrow$ $G$ is diffuse.
\section{Bieberbach groups with trivial center}\label{sec:2}
The purpose of this section is to prove Theorem~\ref{thm:a}.

Hiller and Sah \cite{Hiller} showed that a finite group $D$ is primitive, meaning that it can be realized as the holonomy group of a Bieberbach group  with finite first homology group if and only if no cyclic Sylow subgroup of $D$  admits a normal complement.
For instance cyclic groups (including the trivial group) are not primitive but $(\Z/p)^{m}$ is primitive if $m\geq 2$ and  $p$ is prime.
Recall that as a consequence of Burnside's normal $p$-complement theorem,  if $p$ is the smallest prime that divides the order of a finite group,  then any  cyclic $p$-Sylow subgroup admits a normal complement \cite[p.138, 6.2.12]{Scott}. Therefore, if all Sylow subgroups of a finite group $D$ are cyclic, then $D$
 is not primitive.

For a Bieberbach group $G$ the following conditions are equivalent, see
\cite[Prop. 1.4]{Hiller}:
\begin{itemize}
\item[(i) ] $H_1(G,\Z)$ is a finite group.
\item[(ii) ] $G$ has trivial center, $Z(G)=\{0\}$.
\item[(iii) ] The action of $G$ by conjugation on its lattice subgroup  $N$  has exactly one fixed point, $N^G=\{0\}$.
\end{itemize}
For the sake of completeness we revisit the proof of these equivalences [cf. \cite{Hiller}].
The rank of a discrete abelian group $L$,  denoted  $\rank(L)$, is the dimension of the $\R$-vector space $L\otimes_{\Z} \R$.
\begin{proposition}\label{pro:revisit} If $G$ is a Bieberbach group, then
$\rank(H_1(G,\Z))=\rank(Z(G)) = \rank (N^G).$
\end{proposition}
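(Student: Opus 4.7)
The plan is to prove the two equalities separately. For $\rank(Z(G))=\rank(N^G)$, I would establish the stronger statement $Z(G)=N^G$. The key input is the standard fact that the conjugation action of $D=G/N$ on the lattice $N$ is faithful---an element of $\mathrm{Iso}(\R^k)$ commuting with a full lattice of translations must itself be a translation---so that $C_G(N)=N$. Any central element $z\in Z(G)$ lies in $C_G(N)=N$; conversely, since $N$ is abelian and hence acts trivially on itself by conjugation, an element of $N$ is central in $G$ precisely when it is fixed by the $G$-action, i.e. lies in $N^G$.  Hence $Z(G)=N^G$.

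For the equality $\rank(H_1(G,\Z))=\rank(N^G)$, the main tool is the five-term (inflation-restriction) exact sequence attached to the Bieberbach extension $1\to N\to G\to D\to 1$:
$$H_2(D,\Z)\longrightarrow H_0(D,N)\longrightarrow H_1(G,\Z)\longrightarrow H_1(D,\Z)\longrightarrow 0,$$
where $H_0(D,N)=N_D$ denotes the module of coinvariants. Since $D$ is finite, the two flanking cohomology groups $H_1(D,\Z)$ and $H_2(D,\Z)$ are finite, which forces $\rank(H_1(G,\Z))=\rank(N_D)$.

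It then remains to show $\rank(N_D)=\rank(N^D)$.  This is a standard fact about integral representations of a finite group: tensoring with $\Q$ makes the averaging projector $\tfrac{1}{|D|}\sum_{d\in D}d$ available, which produces a $D$-equivariant splitting of $N\otimes_{\Z}\Q$ and identifies its invariants with its coinvariants.  Combining this with $N^D=N^G$ (again because $N$ acts trivially on itself) yields $\rank(H_1(G,\Z))=\rank(N^G)=\rank(Z(G))$.

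I do not expect a serious obstacle; the argument is essentially assembling the self-centralizing property $C_G(N)=N$ of the lattice, the five-term sequence, and the invariants-vs-coinvariants identity in characteristic zero.  The most delicate point is purely notational: keeping straight that $N^G$, $N^D$, and $Z(G)\cap N$ all refer to the same subgroup of $N$, so that the rank of coinvariants computed by the five-term sequence is really the rank of the center.
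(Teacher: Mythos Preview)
Your proposal is correct and follows essentially the same strategy as the paper: establish $Z(G)=N^G$ from the fact that $N$ is maximal abelian (equivalently, $C_G(N)=N$), and then use the low-degree exact sequence of the extension $1\to N\to G\to D\to 1$ together with finiteness of $H_1(D,\Z)$ and $H_2(D,\Z)$ to match ranks. The one point where you are more explicit than the paper is the invariants-versus-coinvariants step: the homological five-term sequence naturally produces the coinvariants $N_D$, and you correctly insert the averaging argument over $\Q$ to pass to $\rank(N^D)$, whereas the paper writes the middle term directly as $H_1(N,\Z)^D$ and identifies it with $N^G$ without further comment.
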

\begin{proof}
 One observes that $Z(G)=N^G$ since $N$ is maximal abelian and $G$ acts on $N$ by conjugation.
By\cite [Cor.  6.4]{Brown:book-cohomology} the exact sequence \eqref{eq:bieber} induces an exact sequence
\[
\xymatrix{
	H_2(D,\Z) \ar[r] & H_1(N,\Z)^{D} \ar[r] & H_1(G,\Z) \ar[r] & H_1(D,\Z) \ar[r] & 0.
}
\]
The action of $D$ on $H_1(N,\Z)=N/[N,N]=N$ is induced by the conjugation action of $G$ on $N$ so that
$H_1(N,\Z)^{D}\cong N^G$. Since $D$ is a finite group, so are the groups $H_1(D,\Z)$ and $H_2(D,\Z)$.
It follows that $\rank(H_1(G,\Z))= \rank (N^G)$.
\end{proof}
$G$ acts on $N$ by conjugation: $h \mapsto ghg^{-1}$, $h\in N$. Since $N$ is abelian, this action
descends to an action of $D$ on $N=\Z^k$ by automorphisms. We denote the corresponding representation by $\theta:D \to GL_k(\Z)=\mathrm{Aut}(\Z^k).$ The map $\theta$ is injective since $N$ is maximal abelian.

By duality $G$ acts on $\widehat{N}\cong \mathbb{T}^k$ by automorphisms: $g\cdot \chi = \chi(g^{-1}\cdot g)$.
This action descends to an action of $D\cong G/N$ which is the dual of the action $\theta$ discussed above and is denoted by $\theta^*:D \to \mathrm{Aut}(\T^k).$
Let  us consider the fixed points of these actions and observe that $(\Z^k)^D$  is a subgroup of $\Z^k$
and that $(\T^k)^D$ is a closed subgroup of $\T^k$ and  hence it must a be Lie subgroup by Cartan's theorem.
If  $\Gamma$ is the Hantzsche-Wendt group,
the corresponding representation $\theta: D=\Z/2\oplus \Z/2 \to GL_3(\Z)$
has the property that $(\Z^3)^D=\{\mathbf{0}\}$ while $(\T^3)^D$ consists of eight points $(\pm 1, \pm 1, \pm 1)$, see for example \cite{Dad-Pennig-connective}. The finiteness of  $(\T^3)^D$ is not coincidental.
In fact we are going to see as a consequence of Proposition \ref{prop}  that the conditions $(i),(ii), (iii)$ from above are equivalent to the following condition which  plays a crucial role in the proof of our main result.
\begin{itemize}
\item[(iv) ] The dual action of $G$ on $\widehat{N}$ has finitely many fixed points i.e.  $\widehat{N}^G$ is a finite group.
\end{itemize}


 The rank of a Lie group $K$, denoted $\rank(K),$  is the dimension of any one of its Cartan subgroups. If $K$ is  abelian, then $\rank(K)$ coincides with the dimension of $K$.

\begin{proposition}\label{prop} If $D\subset GL_k(\Z)$ is a finite group, then
 $\rank(\Z^k)^D=\rank(\T^k)^D$. Therefore, if $G$ is a Bieberbach group, then $\rank(H_1(G,\Z))=\rank(Z(G)) = \rank (N^G)=\rank(\widehat{N}^G).$\end{proposition}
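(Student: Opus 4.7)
The plan is to reduce both ranks to the dimension of a common real vector space, namely the fixed subspace $(\R^k)^D$, where $D \subset GL_k(\Z)$ acts linearly on $\R^k = \Z^k \otimes_\Z \R$ in the obvious way, and the $D$-action on $\T^k = \R^k/\Z^k$ is its quotient.

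For the integer side, I would invoke the averaging projection $p = \frac{1}{|D|} \sum_{d \in D} d$, which is well defined on $\Q^k$ because $D$ is finite. It is an idempotent whose image is $(\Q^k)^D$, so that $(\Q^k)^D = (\Z^k)^D \otimes_\Z \Q$; therefore $\rank(\Z^k)^D = \dim_\Q (\Q^k)^D = \dim_\R (\R^k)^D$.

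For the torus side, the closed subgroup $(\T^k)^D \subset \T^k$ is a (compact abelian) Lie subgroup by Cartan's theorem, so its rank equals its dimension, which equals the dimension of its Lie algebra. Identifying $\mathrm{Lie}(\T^k)$ with $\R^k$, the $D$-equivariance of the exponential map shows that $X \in \R^k$ belongs to $\mathrm{Lie}((\T^k)^D)$ if and only if $t(d-1)X \in \Z^k$ for every $t \in \R$ and every $d \in D$. For each fixed $d$, the curve $t \mapsto t(d-1)X$ is continuous, starts at $0$, and takes values in the discrete set $\Z^k$, hence is identically zero; so $(d-1)X = 0$ for every $d$, i.e.\ $X \in (\R^k)^D$. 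The converse is immediate, giving $\mathrm{Lie}((\T^k)^D) = (\R^k)^D$ and thus $\rank(\T^k)^D = \dim(\R^k)^D = \rank(\Z^k)^D$.

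The second assertion is then an immediate application of the first: taking $\Z^k = N$ and $D = G/N$ acting through the holonomy representation $\theta$, one gets $\rank(N^G) = \rank(\widehat{N}^G)$, which combined with Proposition~\ref{pro:revisit} yields all four equal ranks. I do not anticipate a serious obstacle; the only delicate point is the identification of $\mathrm{Lie}((\T^k)^D)$, and it is handled cleanly by the rigidity of continuous paths in $\Z^k$.
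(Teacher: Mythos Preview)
Your argument is clean and nearly complete, but you have glossed over the one step that is genuinely non-trivial here. In the context of the proposition, and certainly for the application to $\widehat{N}^G$, the $D$-action on $\T^k$ is the \emph{dual} action on $\widehat{\Z^k}$, not the quotient of the linear action on $\R^k$. Under the identification $a\mapsto e^{2\pi i\langle a,\cdot\rangle}$, the dual action sends $a$ to $(A(s)^{-1})^T a$, so your Lie-algebra computation (which is correct as a computation) actually yields
\[
\mathrm{Lie}\bigl((\T^k)^D\bigr)=\{a\in\R^k:\ A(s)^T a=a\ \text{for all }s\in D\},
\]
the fixed space of the contragredient representation, rather than $(\R^k)^D$ itself. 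Equivalently: even if one reads the first sentence with the quotient action, your ``Therefore'' does not follow, because $\widehat{N}^G$ is by definition the fixed set of the dual action, not of the quotient action.

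So you still owe the equality $\dim(\R^k)^D=\dim\{a:A(s)^Ta=a\}$. This is exactly what the paper isolates with its $P$, $P^*$ argument. Fortunately you already have the tool in hand: your averaging idempotent $p=\tfrac{1}{|D|}\sum_{d\in D}d$ projects $\R^k$ onto $(\R^k)^D$, its transpose $p^T=\tfrac{1}{|D|}\sum_{d}d^T$ is the averaging idempotent for the contragredient and projects onto the other fixed space, and $\rank p=\rank p^T$. Add that one line and your proof is complete, and arguably tidier than the paper's.
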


\begin{proof} Write $\theta:D \to GL_k(\Z)$ for the representation defined by the inclusion map from the statement.
For $s\in D$, $\theta(s)$ is given by an $k\times k$ matrix $A(s)$ with integer coefficients. Thus $D$ acts on $\Z^k$ by $\mathbf{v} \mapsto A(s) \mathbf{v}$. The dual action maps a character $\chi:\Z^k \to \T$ to $\chi(A(s^{-1})\,\cdot\,)$.

By definition
$\mathbf{H}:=(\Z^k)^D=\{\mathbf{v}\in \Z^k: A(s)\mathbf{v}=\mathbf{v}, s\in D\}.$ Since $\mathbf{H}$ is a subgroup of $\Z^k,$ it follows that $\mathbf{H}\cong \Z^n$ for some $0\leq n \leq k$.

Similarly,  $\mathbf{K}:=(\T^k)^D= \{\chi\in \widehat{\Z^k}:\,  \chi(A(s^{-1})\mathbf{v})=\chi(\mathbf{v}),\, s \in D, \mathbf{v} \in \Z^k\}$
 is a compact abelian Lie subgroup of $\T^k$. This implies that there exist a finite subgroup ${F}$ of $\T^k$ and a connected closed subgroup $\mathbf{T}$ of $\T^k$ isomorphic to $\T^m$ for some $0 \leq m \leq k$, such that ${F} \cap \mathbf{T}= \{1\}$ and $\mathbf{K}={F}\mathbf{T}$.
 Since $\rank(\mathbf{K})=\rank(\mathbf{T})=m$ and $\rank(\mathbf{H})=n$, our task is to prove that $n=m$.
 Let $\mathbf{W}=\{\mathbf{a}\in\R^k :\,  e^{2\pi i t\mathbf{a}}\in \mathbf{T}, t\in \R\}$.
 In other words, $\mathbf{W}$ is the Lie algebra of $\mathbf{T}$ and its rank is $m$ as well.
If $\chi$ is the character of $\Z^k$ corresponding to $e^{2\pi i t\mathbf{a}}$, with $\mathbf{a} \in \mathbf{W}$ and $t\in \R$,
 the condition $\chi(A(s^{-1})\mathbf{v})=\chi(\mathbf{v})$ is equivalent to
 \begin{equation} \label{eq:ini}e^{2\pi i \langle t\mathbf{a}, \mathbf{v}\rangle}=e^{2\pi i \langle t\mathbf{a}, A(s^{-1})\mathbf{v}\rangle} =e^{2\pi i \langle A(s^{-1})^{T} t\mathbf{a}, \mathbf{v}\rangle},\quad  \mathbf{v} \in \Z^k, \end{equation}
and therefore to
$A(s^{-1})^{T}t\mathbf{a}-t\mathbf{a} \in \Z^k$, for all $s\in D$.
 Since $\Z^k$ is a discrete space, it follows that
  \begin{equation} \label{eq:a} A(s^{-1})^{T}\mathbf{a}-\mathbf{a} =\mathbf{0} \end{equation}
   for all $s\in D$ and $\mathbf{a}\in \mathbf{W} $.  Conversely, if $\mathbf{a}\in \R^k$ satisfies \eqref{eq:a} for all $s\in D$, then equation \eqref{eq:ini}  shows that $\mathbf{a}\in \mathbf{W}$.
   This allows us to conclude that
   \[\mathbf{W}=\{\mathbf{a}\in \R^k:\, A(s^{-1})^{T}\mathbf{a}=\mathbf{a},\, s \in D\}=\{\mathbf{a}\in \R^k:\, A(s)^{T}\mathbf{a}=\mathbf{a},\, s \in D\}.\]
   On the other hand,  since $A(s)$ are integral matrices acting on free abelian groups, one verifies immediately using Gaussian elimination (or the exact sequence for $\mathrm{Tor}^{\bullet}_\Z(\cdot,\R)$) that
   \[\R^n\cong \mathbf{H}\otimes_\Z \R \cong \{\mathbf{v}\in \R^k: A(s)\mathbf{v}-\mathbf{v}=\mathbf{0}, s\in D\}.\]
In view of the previous discussion, we reduced the proof to showing that the vector spaces
  $\mathbf{W}$ and  $\mathbf{H}\otimes_\Z \R $ have the same dimension.

 Let $V$ be a finite dimensional  vector space over a field whose characteristic does not divide $|D|$ and let $\theta: D \to GL(V)$ be a finite group representation. The subspace of invariant vectors is denoted by  $V^D=\{\mathbf{v}\in V: \theta(s)\mathbf{v}=\mathbf{v},\,  s\in D\}$.
 If $T:V \to V$ is a linear map, we denote by $T^*:V^*\to V^*$ its dual map.
 Since the second dual $T^{**}$ identifies naturally with $T$, we can identify $V^D$ with $(V^{**})^D$.
 We claim that $V^D$ and $(V^{*})^D$ have the same dimension. In view of the remark above it suffices to show that
 $\mathrm{dim}(V^D)\leq \mathrm{dim}((V^{*})^D).$
  Choose a linear map $E:V \to V$  such that $E(V)=V^D$ and $E^2=E$. Then $P:V \to V$ defined by
 \begin{equation}\label{eqn:new}P=\frac{1}{|D|}\sum_{t\in D} \theta(t^{-1})E \theta(t) \end{equation}
 satisfies $P(V)=E(V)=V^D$ and
 \[\theta(s)P=P\theta(s)=P=P^2,  \, \,  s\in D.\]
 Indeed, $\theta(s)P=P\theta(s)$ is immediate from \eqref{eqn:new}. To check that $P$ is a projection onto $V^D$ one notes that  $Pw=w$ for $w\in V^D$ since $Ew=w$, and that $P(V)\subseteq V^D$ since $E(V)\subseteq V^D$ and $\theta(t)w=w$ for $w\in V^D$. This last equality also explains why $\theta(s)P=P$.

 Passing to duals we obtain
 \[\theta(s)^*P^*=P^*\theta(s)^*=P^*=(P^*)^2,\, \, s\in D.\]
 This shows that  $P^*(V^*)\subseteq(V^*)^D.$
By elementary linear algebra,   $\rank(P)=\rank(P^*)$ and hence $\mathrm{dim}(V^D)\leq \mathrm{dim}((V^{*})^D)\leq \mathrm{dim}((V^{**})^D)=\mathrm{dim}(V^D).$

Applying all this to $\theta: D \to GL_k(\R)$, we see that the vector spaces
\[\mathbf{H}\otimes _\Z \R=\{\mathbf{v}\in \R^k: A(s)\mathbf{v}=\mathbf{v}, \, s\in D\} \quad \text{and}\quad \mathbf{W}=\{\mathbf{a}\in \R^k:\, A(s)^{T}\mathbf{a}=\mathbf{a},\, s \in D\}\]
  have the same dimension.
 For the second part of the statement, we invoke Proposition \ref{pro:revisit}.
\end{proof}

We need some elements of representation theory and use the book of Kaniuth and Taylor \cite{book:Kaniuth-Taylor} as a basic reference. Let $G$ be a Bieberbach group as in \eqref{eq:bieber}. The  unitary dual of $G$ consists of  equivalence classes of irreducible unitary representations of $G$ and is denoted by $\widehat{G}$. The term character is reserved for one dimensional representations of a group.
 The stabilizer of a character $\chi$ of $N$ is the subgroup of $G$ defined by
$G_\chi=\{g\in G\colon \chi(g^{-1}\cdot g)=\chi(\cdot)\}$. It is clear that $N\subset G_\chi$  and that there is a bijection from $G/G_\chi$ onto the orbit of $\chi$. 
Mackey's theory shows that each irreducible representation $\pi\in \widehat{G}$ is supported by the orbit of some character $\chi\in \widehat{N}$,  in the sense that the restriction of $\pi$ to $N$ is unitarily equivalent to some multiple  of the direct sum of the characters in the  orbit of $\chi$.
\[\pi |_{N }\sim m_\pi \cdot\bigoplus_{g\in G/G_\chi} \chi(g^{-1}\cdot g) \]
where $g$ runs through a set of coset representatives.

For each $\chi\in \widehat{N}$,  denote by $\widehat{G_\chi}^{(\chi)}$ the subset of $\widehat{G_\chi}$ consisting of classes of irreducible representations $\sigma$ of $G_\chi$ such that the restriction of $\sigma$ to $N$ is unitarily equivalent to a multiple of $\chi$. Let $\Omega\subset \widehat{N}$ be a subset which intersects each orbit of $G$ exactly once.
We need the following basic result due to Mackey, see \cite [Thm. 4.28]{book:Kaniuth-Taylor}
\begin{theorem}\label{thm:Mackey} $\widehat{G}=\left\{\mathrm{ind}_{G_\chi}^{\,G}(\sigma)\colon \sigma \in \widehat{G_\chi}^{(\chi)},\,\,\chi\in \Omega\right\}.$
\end{theorem}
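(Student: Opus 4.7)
The setting is the extension $1 \to N \to G \to D \to 1$ with $N$ abelian and $D = G/N$ finite, which is exactly the classical Mackey machine in its simplest form. The plan is to start with an arbitrary $\pi\in\widehat{G}$, restrict to $N$, use the spectral decomposition of $\pi|_N$ into characters together with the fact that conjugation by $G$ permutes these characters, and then recover $\pi$ as an induced representation from the stabilizer of one such character.

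First, because $N$ is abelian and $[G:N]=|D|<\infty$, the restriction $\pi|_N$ decomposes as a finite orthogonal direct sum of $\chi$-isotypic subspaces $H_\chi \subset H_\pi$. For $g \in G$, the operator $\pi(g)$ sends $H_\chi$ isometrically onto $H_{g\cdot\chi}$. Since $\pi$ is irreducible, the characters that actually appear must form a single $G$-orbit $G\cdot\chi$ and must occur with a common multiplicity $m_\pi := \dim H_\chi$; after replacing $\chi$ by the unique representative in $\Omega$, this gives the displayed formula
\[\pi|_N \sim m_\pi \cdot \bigoplus_{g\in G/G_\chi} \chi(g^{-1}\cdot g).\]

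Next I fix $\chi$ and observe that $G_\chi$ preserves $H_\chi$ by definition of the stabilizer, so $\sigma := \pi(\,\cdot\,)|_{H_\chi}$ is a representation of $G_\chi$ whose restriction to $N$ is $m_\pi\cdot\chi$. To see that $\sigma$ is irreducible, take any $G_\chi$-invariant subspace $W \subset H_\chi$; then $\sum_{g\in G/G_\chi}\pi(g)W$ is a $G$-invariant subspace of $H_\pi$, and the sum is direct because the summands lie in distinct isotypic components $H_{g\cdot\chi}$. Irreducibility of $\pi$ forces this sum to equal $H_\pi$, and projecting back onto $H_\chi$ yields $W=H_\chi$. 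The very same partition into isotypic components exhibits $H_\pi$ as $\mathrm{ind}_{G_\chi}^G(H_\chi)$ at the level of $G$-modules, proving $\pi\cong\mathrm{ind}_{G_\chi}^G(\sigma)$.

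Conversely, for any $\chi\in\Omega$ and any $\sigma\in\widehat{G_\chi}^{(\chi)}$, one checks that $\mathrm{ind}_{G_\chi}^G(\sigma)$ is irreducible and that the resulting assignment on pairs is injective. Irreducibility follows from Mackey's intertwining criterion: for $g\notin G_\chi$, the restrictions of $\sigma$ and of its conjugate ${}^g\sigma$ to $N \subset G_\chi\cap gG_\chi g^{-1}$ are multiples of $\chi$ and $g\cdot\chi$ respectively, hence disjoint since $g\cdot\chi\neq\chi$. Injectivity: if $\mathrm{ind}_{G_\chi}^G(\sigma)\cong\mathrm{ind}_{G_{\chi'}}^G(\sigma')$, then restricting to $N$ forces $G\cdot\chi=G\cdot\chi'$, so $\chi=\chi'$ because $\Omega$ meets each orbit exactly once, and then $\sigma$ and $\sigma'$ are both realized on the $\chi$-isotypic subspace of the same induced module and must coincide. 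The main (and really only) technical point is establishing transitivity of the $G$-action on the characters appearing in $\pi|_N$ together with the disjointness step in Mackey's criterion; everything else reduces to bookkeeping with finite direct sums, made possible by $[G:N]<\infty$.
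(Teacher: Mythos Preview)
The paper does not prove this statement at all: it is quoted as a known result of Mackey and the reader is referred to \cite[Thm.~4.28]{book:Kaniuth-Taylor}. So there is no ``paper's own proof'' to compare against; what you have written is a self-contained sketch of the classical Mackey machine in the special case of an abelian normal subgroup of finite index, and as such it is correct and follows exactly the standard route one finds in Kaniuth--Taylor or any comparable reference.

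One small point worth tightening if you keep this as an actual proof rather than a citation: when you say that $\pi|_N$ ``decomposes as a finite orthogonal direct sum of $\chi$-isotypic subspaces,'' you are implicitly using that the spectral measure of $\pi|_N$ on $\widehat{N}$ is purely atomic. This is true here, and the reason is precisely the finiteness of $D$: the spectral projections corresponding to $G$-invariant Borel subsets of $\widehat{N}$ lie in the commutant of $\pi(G)$, hence are $0$ or $1$ by irreducibility, so the spectral measure is $D$-ergodic; since $D$ is finite, any such measure is supported on a single (finite) orbit. Once that is said, the rest of your argument---identifying $\sigma$ on $H_\chi$, checking its irreducibility via the direct-sum-of-translates trick, and verifying irreducibility and injectivity of the induction map via Mackey's intertwining criterion---is clean and complete.
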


The characters of $G$ factor uniquely through $G/[G,G]$ and hence they can be identified with the characters of $H_1(G,\Z)$. Thus,  a Bieberbach group with trivial center has finitely many characters.
\begin{theorem}\label{thm:b}
   Let $G$ be a Bieberbach group with trivial center. If $\omega$ is a character of $G$, then  $\widehat{G}\setminus\{\omega\}$ is a compact open subset of $\widehat{G}.$
\end{theorem}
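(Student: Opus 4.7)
The plan is to verify ``compact open'' in two steps: first that $\{\omega\}$ is closed in $\widehat{G}$, and then that the resulting open subspace $\widehat{G}\setminus\{\omega\}$ is compact. The first step is immediate, because the character $\omega$ has kernel a maximal two-sided ideal of codimension one in $C^*(G)$, so $\{\omega\}$ is a closed point of $\mathrm{Prim}(C^*(G))\cong\widehat{G}$; moreover $\widehat{G}$ itself is compact since $C^*(G)$ is unital. The real work is in compactness of the open complement.

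The key Mackey-theoretic input is that the fiber $\widehat{G}^{(\chi_0)}$ over the $G$-orbit $\{\chi_0\}$ of $\chi_0:=\omega|_N$ has cardinality at least two. Since $G$ has trivial center, Proposition~\ref{prop} gives that $\widehat{N}^G$ is finite; since $\omega$ is a character, the identity $\omega(gng^{-1})=\omega(n)$ shows $\omega|_N$ is $G$-invariant, so $\chi_0\in\widehat{N}^G$ and its orbit is just $\{\chi_0\}$. By Theorem~\ref{thm:Mackey} the fiber $\widehat{G}^{(\chi_0)}$ is parametrized by the irreducible projective representations of $D=G/N$ for a $\T$-valued $2$-cocycle determined by $\chi_0$ and the extension $N\to G\to D$; the existence of the one-dimensional projective representation $\omega$ in this fiber forces the cocycle to be a coboundary (namely the coboundary of $\omega\circ s$ for any set-theoretic section $s\colon D\to G$), so $\widehat{G}^{(\chi_0)}$ is in bijection with the ordinary dual $\widehat{D}$. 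Since $G$ has trivial center we have $G\neq N$, so $D$ is nontrivial and $|\widehat{G}^{(\chi_0)}|=|\widehat{D}|\geq 2$; I would fix $\omega'\in\widehat{G}^{(\chi_0)}\setminus\{\omega\}$.

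Compactness then follows from a net/multi-convergence argument, and this is the main obstacle. Given a net $(\pi_\alpha)$ in $\widehat{G}\setminus\{\omega\}$, compactness of $\widehat{G}$ supplies a cluster point in $\widehat{G}$, and if it lies outside $\{\omega\}$ we are done; otherwise $\pi_\alpha\to\omega$ along some subnet and, passing to a further subnet, the underlying orbits $\mathcal{O}_{\pi_\alpha}$ accumulate at $\{\chi_0\}$ in $\widehat{N}/G$. The decisive step is to show that in the Jacobson topology on $\widehat{G}\cong\mathrm{Prim}(C(\widehat{N})\rtimes_\alpha D)$, once the supporting orbits accumulate at the $G$-fixed character $\chi_0$, \emph{every} element of the finite fiber $\widehat{G}^{(\chi_0)}$ is a cluster point of $(\pi_\alpha)$, not merely $\omega$; applied to $\omega'$ this produces the required cluster point inside $\widehat{G}\setminus\{\omega\}$. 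To make the multi-convergence rigorous one analyzes $\bigcap_\alpha\ker\pi_\alpha$: its intersection with $C(\widehat{N})\subset C^*(G)$ is, by the orbit convergence, exactly the ideal of functions vanishing at $\chi_0$, which already sits inside $\ker\pi$ for every $\pi\in\widehat{G}^{(\chi_0)}$. The remaining content is that the stabilizer $G_{\chi_\alpha}$ jumping up to $G$ at the limit causes $\mathrm{ind}_{G_{\chi_\alpha}}^{\,G}(\sigma_\alpha)$ to degenerate simultaneously onto every projective irreducible of $D$ at $\chi_0$; this is a standard feature of Mackey--Fell theory for type I crossed products by a finite group, and writing it out carefully in the twisted Bieberbach setting is the delicate point.
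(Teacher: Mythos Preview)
Your overall architecture---reduce to showing that a net in $\widehat{G}\setminus\{\omega\}$ converging to $\omega$ must also cluster at some other point of the fiber $\widehat{G}^{(\chi_0)}$---is exactly the paper's, but the decisive step is handled differently and your version has a genuine gap. The strong claim that \emph{every} element of $\widehat{G}^{(\chi_0)}$ is a cluster point of $(\pi_\alpha)$ is not true in general: if one passes to a subnet with common stabilizer $L$ and $\sigma_\alpha\to\sigma$ pointwise, the limit $\mathrm{ind}_L^G\sigma$ contains only those $\pi\in\widehat{G}^{(\chi_0)}$ with $\sigma\subset\pi|_L$, and when $L/N$ is nontrivial this need not be the whole fiber. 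So fixing $\omega'$ in advance and asserting it is a cluster point does not work; what you actually need (and what does hold) is the weaker statement that $\mathrm{ind}_L^G\sigma$ is not a multiple of $\omega$, hence contains \emph{some} $\gamma\neq\omega$. That follows, for instance, from Frobenius reciprocity for the finite-index inclusion $L\subset G$: the multiplicity of $\omega$ in $\mathrm{ind}_L^G\sigma$ equals $\dim\mathrm{Hom}_L(\sigma,\omega|_L)\le\dim\sigma$, which is strictly less than $\dim(\mathrm{ind}_L^G\sigma)=[G:L]\dim\sigma$. Your sketch also tacitly assumes the stabilizers are proper; the case $G_{\chi_\alpha}=G$ needs to be ruled out separately (here the finiteness of $\widehat N^G$ and of $\widehat{G}^{(\chi_0)}$, together with $T_1$-ness, give a contradiction to $\pi_\alpha\to\omega$).

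The paper avoids invoking any Mackey--Fell continuity package. After reducing to a subsequence with common proper stabilizer $L$ and realizing all $\pi_n=\mathrm{ind}_L^G\sigma_n$ on the fixed space $H_\sigma^{\oplus r}$ with $r=[G:L]>1$, it observes directly from the induced-representation formula that the coset-permutation part forces $E\pi_n(e_r)E=0$ for the projection $E$ onto the first summand and a coset representative $e_r\neq e$. This identity survives in any pointwise limit $\rho$, and a multiple of a character cannot satisfy $E\rho(e_r)E=0$; hence some irreducible constituent $\gamma\neq\omega$ of $\rho$ is a cluster point. Your route, once repaired as above, is arguably more conceptual; the paper's is more elementary and self-contained.
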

\begin{proof}
The proof for the Hantzsche-Wendt group $\Gamma$ from \cite{Dad-Pennig-connective},  with $\omega$ the trivial representation,  uses an explicit calculation of the irreducible representations of $\Gamma$ which is not available for an abstract Bieberbach group.
Nevertheless, we can borrow some ideas from there and adapt them to the general situation.

Points are closed  in $\widehat{G}$ so that $\widehat{G}\setminus\{\omega\}$ is open \cite{Dix:C*}.
Since $\widehat{G}$ is compact and satisfies the second axiom of countability \cite{Dix:C*}, we only need to show that $\widehat{G}\setminus \{\omega\}$  is sequentially compact, \cite[p.~138]{book:Kelley}.
Thus it suffices to show that any sequence $(\pi_n)_n$ of points in $\widehat{G}\setminus \{\omega\}$ which converges to $\omega$  has a subsequence which is convergent to a point in $\widehat{G}\setminus \{\omega\}$.
In the terminology of \cite{Dad-Pennig-connective} this means that $\omega$ is a shielded point in $\widehat{G}$.

  Let $(\pi_n)_n$ be a sequence  in $\widehat{G}\setminus \{\omega\}$ which converges to $\omega$.
 Since $\mathrm{dim}(\pi_n)\leq |D|$, after passing to a subsequence, we may arrange that all the representations $\pi_n$ are of the same dimension $m$.
 By  Theorem~\ref{thm:Mackey} there is a sequence $(\chi_n)_n$ in $\Omega$ such that,  up to unitary equivalence,
  $\pi_n =  \mathrm{ind}_{G_{\chi_n}}^{\,G}(\sigma_n)$ with $\sigma_n \in \widehat{G_{\chi_n}}^{(\chi_n)}$. For each character $\chi$ of $N,$ its stabilizer $G_\chi$ is a subgroup of $G$ that contains $N$. Since $D=G/N$ is a finite group, it follows that the set of stabilizers $\{G_\chi:\chi\in \Omega\}$ is finite. Thus, after passing to a subsequence of $(\pi_n)_n$, we may further assume that all stabilizers groups $G_{\chi_n}$ are equal to the same subgroup $L$ with $N \subset L \subset G$.

  We are going to show that if $L=G$, then  $(\pi_n)_n$ cannot converge to $\omega$.
  Indeed, suppose for a moment that  $L=G$. Then each $\chi_n$ is left invariant by the action of $G$ on $\widehat{N}$ so that $\chi_n \in \widehat{N}^G$.

   Since $N^G=Z(G)=\{\mathbf{0}\}$ by assumption, it follows by Proposition \ref{prop} that the group $\widehat{N}^G$ is finite. Therefore, after passing to a subsequence of $(\pi_n)_n,$ we may further assume that all $\chi_n$ are equal to the same character $\chi\in \widehat{N}^G$.
   Then $\pi_n=\sigma_n\in \widehat{G_\chi}^{(\chi)}=\widehat{G}^{(\chi)}$ and hence $\pi_n|_{N}=m\cdot\chi$ for all $n\geq 1$ (recall that $\dim(\pi_n)=m$). Since $\pi_n$ converges to
   $\omega$ and $\pi_n|_{N}=m\cdot\chi$, it follows that $\omega|_{_N}$ is weakly contained in $\chi$. This can happen only if $\chi=\omega|_{N}$.


    One can also argue that $\chi=\omega|_{N}$ as follows.
     Suppose $\chi\neq \omega|_{N}$ so that
   $\chi (h)-\omega(h) \neq 0$ for some $h\in N$. Then  $a=\chi(h)e-h$ is an element of $ C^*(G)$ with $a \in \bigcap_{n\geq 1} \mathrm{Ker}(\pi_n)$ but $a\notin \mathrm{Ker}(\omega)$ since $\omega(a)=\chi(h)-\omega(h)\neq 0$. This contradicts the assumption that $(\pi_n)_n$ converges to $\omega$.

   Every character  of $G$ factors through the finite group $G/[G,G]$.  Thus there are only finitely many characters and their  images are finite groups. Therefore there is a finite index subgroup $K$ of $G$ contained in $N$ on which every character of $G$ is trivial. Since $\pi_n|_{N}=m\cdot\chi=m\cdot\omega|_{N}$, it follows that $\pi_n|_{K}=m\cdot\iota|_{K}$ and hence each $\pi_n$ factors through the homomorphism $G\to G/K$ whose image is finite.  Since the unitary dual of $G/K$ is finite, there are only finitely many distinct terms in the sequence $(\pi_n)_n$ (when viewed as elements of $\widehat{G}$). The points of $\widehat{G}$ are closed and therefore $(\pi_n)_n$ can converge to $\omega$
   only if the sequence is eventually constant and equal to $\omega$. This contradicts the assumption that $\pi_n\in \widehat{G}\setminus\{\omega\}$ for all $n\geq 1$.

Thus it suffices to deal  with the case when all the stabilizers $G_{\chi_n}$ are equal to a fixed subgroup $L \neq G$. Let $r=[G:L]>1$ and choose elements $e_1,...,e_r$ in G such that $e_1$ is the neutral element  and
$G$ is the disjoint union of the cosets $e_iL$. Since all of $(\pi_n)_n$ and hence all of $(\sigma_n)_n$ are of equal dimension,  each
 $\sigma_n$  can be realized on a fixed Hilbert space $H_\sigma=\C^d$, independently of $n$. Then $\pi_n=\mathrm{ind}^{\,G}_L(\sigma_n) : G \to L(H_{\pi_n})$ acts on
\[ H_{\pi_n}=\{\xi:G \to H_\sigma : \, \xi(gh)=\sigma_n(h)^{-1}\xi(g),\, g\in G,\,  h \in L\},\]
by $\pi_n(g)\xi=\xi(g^{-1}\cdot)$.
Each element $\xi\in H_{\pi_n}$ is completely determined on the coset $e_iL$ by its value $\xi(e_i)$ since $\xi(e_ih)=\sigma_n(h)^{-1}\xi(e_i)$ for $h\in L$.
Consider the Hilbert space  $H_\pi= H_\sigma^{\oplus \,r}.$  For each $n$, let $V_n:H_{\pi_n} \to H_\pi$ be the unitary operator defined by
\[V_n(\xi)=(\xi(e_1),...\xi(e_r)).\]
Its adjoint $V_n^*$ maps a vector $(\xi_1,...,\xi_r)\in H_\sigma^{\oplus\,r}$ to a function $\xi:G \to H_{\sigma}$ such that the restriction of $\xi$ to the coset $e_iL$ is given by $\xi(e_ih)=\sigma_n(h)^{-1}\xi_i$ for $h\in L$. We will replace $\pi_n$ by the unitary representation $\rho_n=V_n\pi_n(\cdot)V_n^*$.
Let us observe that $\rho_n(e_r)$ maps $(\xi_1,0,...,0)$ to $(0,...,0,\xi_1)$ for $\xi_1\in H_\sigma$.
Indeed, $V_n^*(\xi_1,0,...,0)=\xi$ where $\xi$ is supported on $L$ and $\xi(h)=\sigma_n(h)^{-1}\xi_1$.
 Then $\pi_n(e_r)\xi=\xi(e_r^{-1}\, \cdot)$ is supported entirely on the coset $e_rL$ and hence
 $V_n\pi_n(e_r)\xi=(0,...,0,\xi_1).$
Let $E:H_\sigma^{\oplus \,r} \to H_\sigma^{\oplus \,r} $ be the orthogonal projection of $H_\pi$ onto its first summand, $E(\xi_1,...\xi_r)=(\xi_1,0,...,0)$.
Since $\rho_n(e_r)(\xi_1,0,...,0)=(0,...,0,\xi_1)$,
it follows that $E\rho_n(e_r)E=0$ for all $n \geq 1$.

  Since $U(H_\pi)$ is compact and the group $G$ is finitely presented (as an extension of finitely presented groups), it follows that
$(\rho_n)_n$ has a subsequence $(\rho_{n_i})_i$ which converges in the point-norm topology to a unitary representation $\rho:G \to U(H_\pi)$. Thus $\lim_{i\to \infty}\|\rho_{n_i}(g)-\rho(g)\|=0$ for all $g\in G$. It follows that $E\rho(e_r)E=0.$
This implies that $\rho$ cannot be a multiple of  $\omega$ since in that case $\rho$ would commute with $E$ and we would have $\|E\rho(e_r)E\|=\|\omega(e_r)E\|=|\omega(e_r)|=1$. Decompose $\rho$ into irreducible representations of $G$. At least one of those, denoted by $\gamma$, must be different from $\omega$.
Since $\lim_{i\to \infty}\|\rho_{n_i}(g)-\rho(g)\|=0$ for all $g\in G$, it follows that $(\rho_{n_i})_i$ and hence $(\pi_{n_i})_i$ converges to $\gamma \in \widehat{G}\setminus \{\omega\}$.
   \end{proof}

   \textbf{Proof of Theorem~\ref{thm:a}.}
   We have seen in the proof of Theorem~\ref{thm:b} (applied for the trivial representation $\iota$) that
   $\iota$ is a shielded point in $\widehat{G}$. By  \cite[Cor. 2.11]{Dad-Pennig-connective}
   if $G$ is any countable amenable group such that $\iota$ is a shielded point in $\widehat{G}$, then $G$ is not connective.

 One may also observe that since virtually abelian groups are type I, the primitive spectrum of $I(G)$ identifies with $\widehat{G}\setminus\{\iota\}$ and hence it is compact. Therefore $I(G)$ is not connective by
 \cite[Prop. 2.7]{Dad-Pennig-connective} as $I(G)\otimes \mathcal{O}_2$ contains a nonzero projection. Here $\mathcal{O}_2$ is the Cuntz algebra. \qed
\section{Connective Bieberbach groups }\label{sec:last}
 In this section we characterize the connective Bieberbach groups.
 Recall that a poly-$\Z$ (or strongly polycyclic) group $G$ is a group which admits a finite increasing series of subgroups
 ${1}=G_0 \subset G_1 \subset \cdots \subset G_k=G$ such that $G_i$ is a normal subgroup of $G_{i+1}$ and $G_{i+1}/G_i \cong \Z$ for all $0 \leq i \leq k-1$.
 For each $i$ we have a split extension
\[
	\xymatrix{
		1 \ar[r] & G_i\ar[r] & G_{i+1} \ar[r] & \Z \ar[r] & 1,
	}
\]
so that $G_{i+1}$ is isomorphic to a semidirect product $G_i\rtimes_{\alpha} \Z$ for some $\alpha\in \Aut(G_i)$.

 \begin{lemma}[Lemma 3.5, \cite{Dad-Pennig-connective}] \label{lem:fin_ext}
Let $m > 1$ and let $G$ and $G'$ be countable discrete groups that fit into a short exact sequence:
\[
	\xymatrix{
		1 \ar[r] & G' \ar[r] & G \ar[r]^-{\pi} & \Z/m\Z \ar[r] & 1\ .
	}
\]
If $G'$ is connective and  the homomorphism $\pi$ factors through $\Z$, then $G$ is also connective.
\end{lemma}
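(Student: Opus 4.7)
The plan is to use the factorization of $\pi$ through $\Z$ to construct a fibre product group $\widehat{G}$ that simultaneously realizes $G\times \Z$ as $G'\rtimes \Z$, and then to deduce connectivity of $G$ from preservation of connectivity under $\Z$-extensions and under retracts.

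The first step is to lift $\pi$ to a surjective homomorphism $\widetilde\pi\colon G\to \Z$ with $\pi = q\circ \widetilde\pi$, where $q\colon\Z\to \Z/m\Z$ is the quotient. Surjectivity of $\widetilde\pi$ can be arranged because $\pi$ is surjective and the image of $\widetilde\pi$ is an infinite cyclic subgroup of $\Z$, which we identify with $\Z$ (possibly replacing $q$ by $q$ composed with an automorphism of $\Z/m\Z$, which is harmless). Setting $\widetilde G:=\ker(\widetilde\pi)$, one has $\widetilde G\subseteq G'$ and $G/\widetilde G\cong \Z$.

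The heart of the argument is the fibre product $\widehat G := \{(g,n)\in G\times \Z : \pi(g)=q(n)\}$. On one hand, $g\mapsto (g,\widetilde\pi(g))$ is a homomorphic section of the first projection $\widehat G\to G$, and its kernel $\{(1,mk):k\in \Z\}\cong \Z$ is central in $\widehat G$ (it is already central in $G\times \Z$), so a split central extension gives $\widehat G\cong G\times \Z$. On the other hand, the second projection $\widehat G\to \Z$ is surjective with kernel $G'\times\{0\}\cong G'$, and since $\Z$ is free this extension splits, giving $\widehat G\cong G'\rtimes \Z$. Combining the two descriptions yields $G\times \Z\cong G'\rtimes \Z$.

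Since $G'$ is connective, preservation of connectivity under extensions with quotient $\Z$ (a basic result from \cite{Dad-Pennig-connective}) applied to $1\to G'\to G'\rtimes \Z\to \Z\to 1$ shows that $G'\rtimes \Z\cong G\times \Z$ is connective. Finally, $G$ is a retract of $G\times \Z$ via the inclusion $g\mapsto (g,0)$ and the first projection; both maps intertwine the trivial representations, so $I(G)$ sits inside $I(G\times \Z)$ as a retract, and connectivity of separable $C^*$-algebras is stable under retracts since asymptotic morphisms pull back along a retract. Hence $G$ is connective. The main technical point is the invocation of the $\Z$-extension preservation theorem from \cite{Dad-Pennig-connective}; the fibre product identification is a formal manipulation, and the retract step follows immediately from the definition of connectivity via asymptotic morphisms.
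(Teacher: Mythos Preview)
The paper does not prove this lemma; it is quoted verbatim from \cite{Dad-Pennig-connective} and used as a black box. There is therefore no proof in the present paper to compare against, and I can only comment on the internal soundness of your argument.

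Your fibre-product construction is correct and yields the clean identification $G\times\Z\cong G'\rtimes_\alpha\Z$ for a suitable $\alpha\in\Aut(G')$. The retract step at the end is also fine, and could in fact be shortened: connectivity passes to subgroups \cite{Dad-Pennig-connective}, and $G$ sits inside $G\times\Z$ as a subgroup.

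The one point that requires care is your appeal to ``preservation of connectivity under extensions with quotient $\Z$'' as a basic result from \cite{Dad-Pennig-connective}. Observe that in the present paper, Corollary~\ref{cor:ind} derives precisely the implication ``$G'$ connective $\Rightarrow G'\rtimes_\alpha\Z$ connective'' (under the hypothesis $\alpha^m\in\Inn(G')$, which your particular $\alpha$ does satisfy since the chosen lift $g_1$ of $1\in\Z$ has $g_1^m\in G'$) \emph{from} Lemma~\ref{lem:fin_ext}. If the group-level $\Z$-extension result were already available in the form you state, Corollary~\ref{cor:ind} would be immediate and would not need Lemma~\ref{lem:fin_ext} at all. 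To avoid any appearance of circularity you should name the result you invoke precisely: the $C^*$-algebraic statement that $A$ connective implies $A\rtimes_\alpha\Z$ connective, combined with the short exact sequence $0\to I(G')\rtimes\Z\to I(G'\rtimes\Z)\to I(\Z)\to 0$ and closure of connectivity under extensions, gives what you need without touching Lemma~\ref{lem:fin_ext}. As written, the unspecified ``basic result'' is the only soft spot in an otherwise correct and rather elegant argument.
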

\begin{corollary}\label{cor:ind}
  Let $G$ be a discrete countable group.  Let $\alpha\in \Aut(G)$ be such that $\alpha^m \in \Inn(G)$ for some $m\geq 1$. If $G$ is connective,
  then the semidirect product $G\rtimes_{\alpha}\Z$ is also connective.
\end{corollary}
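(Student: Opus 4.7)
The plan is to realize $G\rtimes_{\alpha}\Z$ as a $\Z/m\Z$-extension of a group of the form $G\times\Z$ and then appeal to Lemma~\ref{lem:fin_ext}. Write $\widetilde{G}=G\rtimes_{\alpha}\Z$ and let $t$ denote the generator of the $\Z$-factor, so that $tgt^{-1}=\alpha(g)$ for $g\in G$. By hypothesis we may choose $h\in G$ with $\alpha^{m}=\mathrm{conj}_{h}$.

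Next I would consider the element $s:=h^{-1}t^{m}\in\widetilde{G}$. A direct calculation gives $sgs^{-1}=h^{-1}\alpha^{m}(g)h=g$ for every $g\in G$, so $s$ centralizes $G$. The image of $s$ under the natural projection $\widetilde{G}\to\Z$ is $m$, hence $\langle s\rangle\cong\Z$ and $G\cap\langle s\rangle=\{1\}$. Consequently $H:=G\cdot\langle s\rangle$ is an internal direct product $G\times\langle s\rangle\cong G\times\Z$. Moreover $H$ is exactly the kernel of the composite surjection $\pi\colon\widetilde{G}\twoheadrightarrow\Z\twoheadrightarrow\Z/m\Z$, so $[\widetilde{G}:H]=m$ and $\pi$ factors through $\Z$ by construction.

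For $m\geq 2$, Lemma~\ref{lem:fin_ext} applied to the extension $1\to H\to\widetilde{G}\to\Z/m\Z\to 1$ then delivers connectivity of $\widetilde{G}$, provided only that $H\cong G\times\Z$ is connective. In the remaining case $m=1$, we have $\alpha\in\Inn(G)$ and $\widetilde{G}\cong G\times\Z$ outright. Either way, the corollary reduces to the assertion that $G\times\Z$ is connective whenever $G$ is.

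The main obstacle is thus this last closure property: connectivity is preserved under direct product with $\Z$. This should be available from \cite{Dad-Pennig-connective} as a basic stability feature of connectivity (indeed, it is essentially the engine that will make poly-$\Z$ groups connective, which is the intended downstream application in Theorem~\ref{thm:char}). If a direct verification is needed, one would use the decomposition $C^{*}(G\times\Z)\cong C^{*}(G)\otimes C(\T)$ together with the fact that the augmentation ideal $I(G\times\Z)$ fits into a short exact sequence built from $I(G)\otimes C(\T)$ and $C^{*}(G)\otimes I(\Z)$, both of which are compatible with the defining property of connectivity.
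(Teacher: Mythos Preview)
Your proof is correct and follows essentially the same route as the paper: exhibit $G\times\Z$ as an index-$m$ normal subgroup of $G\rtimes_{\alpha}\Z$ with the quotient map factoring through $\Z$, then invoke Lemma~\ref{lem:fin_ext}. The one point you flag as uncertain---that $G\times\Z$ is connective whenever $G$ is---is handled in the paper by citing \cite[Cor.~3.3]{Dad-Pennig-Schneider} (not \cite{Dad-Pennig-connective}), which states that direct products of discrete amenable connective groups are connective.
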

\begin{proof}
 Since $\beta:=\alpha^m$ is a inner automorphism, $G\rtimes_{\beta}\Z\cong G \times \Z$. It follows that
 $G\rtimes_{\beta}\Z$ is connective since by \cite[Cor. 3.3]{Dad-Pennig-Schneider} direct products of discrete amenable connective groups are connective.
  The group monomorphism
 $G\rtimes_{\beta}\Z \to G\rtimes_{\alpha}\Z$, $(x,k)\mapsto (x,mk)$, $x\in G$, $k \in \Z$,  induces an exact sequence of groups
 \[
	\xymatrix{
		1 \ar[r] & G\rtimes_{\beta}\Z \ar[r] & G\rtimes_{\alpha}\Z \ar[r]^-{\pi} & \Z/m\Z \ar[r] & 1\ .
	}
\]
where $\pi$ is the composition of the  quotient map $\Z\to \Z/m\Z$ with $G\rtimes_{\alpha}\Z \to \Z$.
We conclude that $G\rtimes_{\alpha}\Z $ is connective by applying Lemma~\ref{lem:fin_ext}.
\end{proof}
 \begin{proposition}[Thm. 3.2, \cite{Szczepanski-book}]\label{prop:inner}
 Let $G$ be a Bieberbach group and let $\alpha\in \Aut(G)$.  Then the semidirect product $G\rtimes_\alpha \Z$ is a Bieberbach  group if and only if there exists $m\geq 1$ such that $\alpha^m \in \Inn(G)$.
 \end{proposition}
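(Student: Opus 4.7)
The plan is to prove both implications algebraically, exploiting the lattice $N$ of $G$ and how $\alpha$ acts on it. In both directions, $G\rtimes_\alpha \Z$ is automatically torsion-free, since any element projects to $\Z$ and an element with zero projection lies in the torsion-free group $G$. So the real content reduces to producing a finite-index normal $\Z^{k+1}$-subgroup in one direction versus deducing that $\alpha^m$ is inner in the other.

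In the easier direction, assume $\alpha^m=\mathrm{conj}_{g_0}$. Since $N$ is characteristic in $G$, $\alpha$ restricts to $\alpha_N\in GL_k(\Z)$, and $\alpha_N^m=\theta(\bar g_0)$ lies in the finite group $\theta(D)$, so $\alpha_N$ has finite order $s_1$. The induced $\bar\alpha\in\Aut(D)$ has finite order $s_2$ since $D$ is finite. Setting $s=\mathrm{lcm}(s_1,s_2)$, the subset $H=\{(n,sk):n\in N,\,k\in\Z\}\subset G\rtimes_\alpha \Z$ is the kernel of the natural surjection $G\rtimes_\alpha \Z\to D\rtimes_{\bar\alpha}(\Z/s\Z)$, realizing $H\cong \Z^{k+1}$ as a normal abelian subgroup of finite index $|D|\cdot s$; hence $G\rtimes_\alpha \Z$ is Bieberbach.

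For the converse, let $M\cong\Z^{k+1}$ be the lattice of $G\rtimes_\alpha \Z$. I would first identify $M\cap G=N$: $N$ is abelian normal in $G\rtimes_\alpha \Z$ (being characteristic in $G$), hence contained in the maximal abelian normal subgroup $M$; conversely $M\cap G$ is abelian normal in $G$, hence contained in $N$ by maximality. Thus $M/N$ is a nontrivial subgroup $t\Z$ of $(G\rtimes_\alpha \Z)/G=\Z$, and picking $(g_0,t)\in M$ over its generator, commutativity of $M$ with $N$ forces $\alpha^t(n)=g_0^{-1}ng_0$ for all $n\in N$. Consequently $\psi:=\mathrm{conj}_{g_0}\circ\alpha^t$ is an automorphism of $G$ that restricts to the identity on $N$.

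The main obstacle is to conclude from here that some power of $\alpha$ is inner. I would show that the image $\bar A$ in $\mathrm{Out}(G)$ of $A:=\{\phi\in\Aut(G):\phi|_N=\mathrm{id}\}$ is finite. Using $C_G(N)=N$, each $\phi\in A$ has the form $\phi(g)=g\,c(g)$ with $c(g)\in N$, and $c$ is a crossed homomorphism vanishing on $N$, hence factors through $D$. This identifies $A\cong Z^1(D,N)$ and $A\cap\Inn(G)=\{\mathrm{conj}_n:n\in N\}\cong B^1(D,N)$, so $\bar A\cong H^1(D,N)$. This group is finite because $D$ is finite (so $H^1$ is annihilated by $|D|$) and $N$ is finitely generated over $\Z$. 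Since $[\psi]=[\alpha^t]$ in $\mathrm{Out}(G)$ (as $\mathrm{conj}_{g_0}$ is inner), this class has finite order $j$ dividing $|H^1(D,N)|$, giving $\alpha^{tj}\in\Inn(G)$, and $m=tj$ works.
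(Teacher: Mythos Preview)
The paper does not prove this proposition at all; it is quoted as Theorem~3.2 of Szczepa\'nski's book and used as a black box in the proof of Theorem~\ref{thm:char}. So there is no ``paper's proof'' to compare against.

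Your argument is correct. A couple of small points worth tightening. In the forward direction you produce a finite-index normal abelian $H\cong\Z^{k+1}$ but do not check that $H$ is \emph{maximal} abelian; this is harmless because any torsion-free group containing a finite-index normal $\Z^{k+1}$ is Bieberbach (pass to the centralizer as in \cite{Vasquez}, which the paper already invokes), but you should say so. In the converse, the key step---that $c(g)=g^{-1}\phi(g)$ lands in $N$---relies on $C_G(N)=N$, which you state; it is worth making explicit the one-line computation $\phi(gng^{-1})=gng^{-1}=\phi(g)n\phi(g)^{-1}$ that forces $g^{-1}\phi(g)\in C_G(N)$. After that, your identification $A/(A\cap\Inn(G))\cong H^1(D,N)$ and the finiteness of $H^1$ of a finite group with finitely generated coefficients are standard, and the conclusion $\alpha^{tj}\in\Inn(G)$ follows cleanly.

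Your approach is in fact close in spirit to Szczepa\'nski's: both directions hinge on the characteristicity of the lattice and on controlling automorphisms that are trivial on $N$ via $H^1(D,N)$. The cohomological packaging you use for the converse is a clean way to see why only finitely many outer classes fix $N$ pointwise.
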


 Calabi (see \cite{Calabi}, \cite{Szczepanski-book}, \cite{Wolf-spaces}) introduced a reduction method in the study of flat manifolds
 which highlights the central role of the manifolds with the first Betti number zero. In the context of Bieberbach groups
 this translates as follows. If $G$ is a $k$-dimensional Bieberbach group then $H_1(G,\Z)=G/[G,G]$ is a finitely generated abelian group. Thus if $H_1(G,\Z)$ is not a finite group, then there is a surjective map $G \to \Z$.
 Its kernel $G'$ is a $(k-1)$-dimensional Bieberbach group \cite[Prop. 3.1]{Szczepanski-book} and hence $G$ admits a semidirect product decomposition $G\cong G'\rtimes_{\alpha'} \Z$.
 If $H_1(G',\Z)$ is not finite, one can repeat the procedure  and ``peel off" another copy of $\Z$, etc.
\vskip 14pt
 \textbf{Proof of Theorem~\ref{thm:char}}

 (i) $\Rightarrow$ (ii)
  Suppose that $G$ is a connective Bieberbach group. Then all its  subgroups are connective, since connectivity  passes to subgroups \cite{Dad-Pennig-connective}. It follows then by Theorem~\ref{thm:a} and Proposition~\ref{pro:revisit} that every nontrivial  subgroup of $G$ has a nontrivial center.

(ii) $\Leftrightarrow$ (iii)
  A  Bieberbach group $G$ has the property that every nontrivial  subgroup of $G$ has a nontrivial center if and only if it is poly-$\Z$ by \cite[Thm. 23]{Farkas}.

Alternatively, to argue that (ii) $\Rightarrow$ (iii), one can invoke Calabi's method as explained above and write $G$ as an iterated semidirect product
 \[G\cong ((H \rtimes \Z)\rtimes \cdots )\rtimes \Z\]
 where either $H$ is a Bieberbach group with finite first homology group (trivial center) or $H=\{1\}$. By assumption (ii) we see that $H$ must be trivial
 and hence $G$ is poly-$\Z$.

(iii) $\Rightarrow$ (i)
Assume that $G$ is a poly-$\Z$ Bieberbach group of dimension $k\geq 1$. This means that $G$ is constructed iteratively
 by starting with $G_1=\Z$, and then constructing $G_2,...,G_k=G$, where $G_{i+1}=G_i \rtimes_{\alpha_i} \Z$
 for some $\alpha_i \in \Aut(G_i)$. One must have  $\alpha_i^{m_i}\in \Inn(G_i)$   for some $m_i\geq 1$, as a consequence of Proposition~\ref{prop:inner}. Therefore if $G_i$ is connective, then $G_{i+1}$ is also connective. This implication was previously pointed out
  in \cite{Szczepanski} under the stronger assumption that  $\alpha_i$ is a finite order automorphism of $G_i$.
  Since $G_1=\Z$ is connective,  we use Corollary~\ref{cor:ind} to prove by induction that each $G_i$ is connective and hence so is $G$.

(i) $\Leftrightarrow$ (iv) Since $G$ is a virtually abelian group, $C^*(G)$ is a type I $C^*$-algebra.
Thus $\mathrm{Prim}(I(G))=\widehat{I(G)}=\widehat{G}\setminus\{\iota\}$, see \cite{Dix:C*}.
 The desired equivalence follows now from  the characterization of separable nuclear connective $C^*$-algebras in terms of their primitive spectra as explained in the introduction, see \cite{Dad-Pennig-connective} and \cite{Gabe:connectivity}. \qed

\vskip 14pt
 \textbf{Proof of Corollary~\ref{cor}}\quad

 (a)  A connective Bieberbach group is poly-$\Z$ by Theorem~\ref{thm:char} and hence its quotient $D$ must be solvable.

 (b) Let $\G$ be the class of all finite groups whose Sylow subgroups are all cyclic.
If $D$ is in $\G$, so are the normal subgroups and the quotients of $D$.
We prove by induction on $k$ that a $k$-dimensional Bieberbach group with holonomy in $\G$ is poly-$\Z$ and then apply Theorem~\ref{thm:char} to conclude that $G$ is connective. If $k=1$ then $G\cong \Z$ and we are done.
 Suppose now that $k>1$. By assumption,  $D$  belongs to the class $\G$,  and hence it  is not primitive,
 as explained in Section~\ref{sec:2}.
 Therefore $H_1(G,\Z)$ is not finite and as seen earlier in the proof of Theorem~\ref{thm:char}, $G\cong G' \rtimes \Z$ where $G'$ is a Bieberbach group of rank $k-1$.  The following diagram with exact rows and columns
 \begin{equation*}\label{eq:Calabi}
\xymatrix{
	& 0\ar[d]& 0 \ar[d] & 0 \ar[d] \\
	0 \ar[r] & N\cap G'  \ar[d] \ar[r] & G' \ar[r] \ar[d]& D_0  \ar[r] \ar[d] & 0\\
 	0 \ar[r] & N \ar[d] \ar[r] & G \ar[r] \ar[d] & D \ar[r] \ar[d] & 0 \\
 	0 \ar[r] & N/N\cap G'\ar[d] \ar[r] & \Z\ar[r] \ar[d] & D/D_0 \ar[r] \ar[d] & 0 \\
	& 0& 0 & 0
}
\end{equation*}
shows that the quotient of $G'$ by $N\cap G'\cong \Z^{k-1}$, denoted by $D_0$, is isomorphic to  a normal subgroup of $D$. The finite group $D_0$ is not necessarily the holonomy group of $G'$ since $N\cap G'$ is not always maximal abelian in $G'$. However, by
\cite[Thm. 3.1]{Vasquez}, the centralizer of $N\cap G'$ in $G'$  is a maximal abelian normal subgroup $N'\cong \Z^{k-1}$ of $G'$.
 It follows that the holonomy group of $G'$ is $D'\cong G'/N'$ and moreover $D'$  is isomorphic to
 a quotient of $D_0$ because $N\cap G'\subset N'$.  Since $D$ is in $\G$, it follows that  $D'$ belongs to $\G$, as explained at the beginning of the proof. By the induction hypothesis, $G'$ is poly-$\Z$ and therefore so is $G$, since $G\cong G' \rtimes \Z$. \qed

 (c)   We have seen, as a consequence of
Theorem~\ref{thm:char}, that   a Bieberbach group $G$ is connective $\Leftrightarrow$ $G$ is locally indicable $\Leftrightarrow$ $G$ is diffuse. By \cite[Thm.~3.5 (iii)]{Diffuse}, if a finite group $D$ is solvable and has a non-cyclic Sylow group, then $D$ can be realized as the holonomy of both a diffuse  group $G_1$ and a  non-diffuse group $G_2$. \qed

\begin{remark} In view of the previous discussion, one can also formally derive both parts (a) and (b) of Corollary~\ref{cor}   from Theorem~\ref{thm:char} and \cite[Thm.~3.5 (i), (ii)]{Diffuse}.
Nevertheless,  we included a direct proof,  as a mean to review  Calabi's method on which the main result of our paper is based.

In \cite{GLS}, the authors find all Bieberbach groups up to dimension six that are non-diffuse and hence not connective. There are $38,746$ six-dimensional Bieberbach groups, out of which  $19,256$ (almost a half) are not connective.

There are no general classification results for Bieberbach groups.  To give an idea of the complexity of this question, let us mention that the number of non-isomorphic $k$-dimensional Bieberbach groups with   holonomy $\Z/2 \oplus \Z/2$ and finite first homology group grows as least as fast as $C k^5$ for some $C>0$, \cite{Rossetti-T}.

\end{remark}

  \vskip 14pt
 \textbf{Acknowledgements}
 We thank Nansen Petrosyan for calling our attention to the references \cite{GLS} and \cite{Diffuse}.
 We also thank the referee for a useful suggestion concerning the proof of Proposition \ref{prop}.
\bibliographystyle{abbrv}

\end{document}